\providecommand{\U}[1]{\protect \rule{.1in}{.1in}}
\newtheorem{theorem}{Theorem}[section]
\newtheorem{lemma}[theorem]{Lemma}
\theoremstyle{definition}
\newtheorem{example}[theorem]{Example}
\theoremstyle{remark}
\numberwithin{equation}{section}
\newcommand{\iii}{{\, \vert\kern-0.25ex\vert\kern-0.25ex\vert\, }}
\begin{document}
\thispagestyle{empty}
\title[Petrovsky-Wave Nonlinear coupled system with strong damping]{Uniform Stabilization of the Petrovsky-Wave Nonlinear coupled system with strong damping}
\author[A. Ben Aissa]{Akram Ben Aissa*}

\date{}
\begin{abstract}This paper concerns the well-posedness and uniform stabilization of  the Petrovsky-Wave Nonlinear coupled system with strong damping. Existence of global weak solutions for this problem is established by using the Galerkin method. Meanwhile, under a clever use of the multiplier method, we estimate the total energy decay rate.
\end{abstract}

\subjclass[2010]{35D30, 93D15, 74J30.}
\keywords{Coupled systems, Nonlinear strong  damping, Well-posedness, Faedo-Galerkin,  General decay, Multiplier method, Convexity.\\
*:UR Analysis  and Control  of PDE's, UR 13ES64\\
Higher Institute of transport and Logistics of Sousse, University of Sousse,  Tunisia.\\
email:akram.benaissa@fsm.rnu.tn}

\maketitle

\section{Introduction}

For simplicity reasons, we omit the space variable $x$ of $u(x, t), u_t(x, t)$ and we  denote $u(x, t) = u, u_t(x, t) = u'$ and $u_{tt}(x,t)=u''$. In addition,  when no confusion arises, the functions considered are all real valued.\\
 Our main interest lies in the following system of the coupled  Petrovsky-wave system  of the type
\small{\begin{equation}\label{P}
\left\{
\begin{aligned}
& u''_1+ \Delta^2 u_1 - a(x) \Delta u_2- g_1(\Delta  u'_1) =0,
 \  &x\in \Omega, t \geq 0& \\
 &u''_2- \Delta u_2 - a(x) \Delta u_1- g_2( \Delta u'_2)=0 \ &x\in \Omega, t \geq 0&
 \\&\Delta u_1 =u_1=u_2 = 0, \  &x\in \Gamma, t \geq 0& \\
&u_i(x,0)=u_{i}^0(x), \, \,\, u'_{i}(x,0)=u_{i}^1(x),\   &x\in \Omega,\,\,\,\,\, i=1,2.& \\
\end{aligned}
\right.
\end{equation}}
Here $ \Omega $ is a bounded domain of $ \mathbb{R}^n $ with regular boundary $ \Gamma $.\\
When $a(x)=0$, the  Petrovsky equation was treated by Komornik  {\bf\cite{KOM}}, 
where he used  semigroup approach for setting the well possedness and
he studied the  strong  stability 
by introducing a  multiplier method combined with a nonlinear integral inequalities.
Recently,  Bahlil et al. \cite{Feng}, studied the  system 
\begin{equation}\label{K1}
\left\{
\begin{array}{lll}
u_{1}''+a(x) u_2+\Delta^{2}u_1-g_1(
u_{1}'(x,t))=f_1(u_1, u_2), & in & \Omega \times \mathbb{R}^{+}, \\
u_{2}''+a(x) u_1-\Delta u_2-g_2(
u_{2 }'(x,t))=f_2(u_1, u_2), & in & \Omega \times \mathbb{R}^{+}, \\
\partial_{\nu}u_1= u_1=v= u_2 =0 & on & \Gamma \times \mathbb{R}^{+}, 
\end{array}%
\right.        
\end{equation}
for $g_i\,\, (i=1,2)$  do not necessarily having a polynomial growth near the origin, by using Faedo-Galerkin method to prove the existence and uniqueness of solution and established energy decay results depending on $g_i$. 
Guesmia {\bf\cite{Ga}}  consider the problem (\ref{K1}) without Source
Terms $f_1$ and $f_2$. He  deal with global existence and uniform decay of solutions.\\

In this paper, we prove the global existence of weak solutions of the problem (\ref{P}) by
using the Galerkin method (see Lions  {\bf\cite{Lio}}) 
we use some technique from {\bf\cite{Feng}} to establish an explicit and general decay result, depending on $g_i$. The proof is based on a powerful tool which is  the multiplier method \cite{lions,KOM1} and makes use of some properties of convex functions, and  general  Jensen and Young's inequalities. These convexity arguments were introduced
and developed by Lasiecka and co-workers ({\bf\cite{Las}},{\bf\cite{Las1}}) and exploited later on, with appropriate modifications, by Liu and Zuazua {\bf\cite{Liu}}, Alabau-Boussouira {\bf\cite{Ala}} and others.

The paper is organized as follows. In section 2  we present some assumptions and  technical lemmas. In section 3 we prove the existence and the uniqueness  of a global solution.  In section 4 we prove the energy estimates.
\section{Functional setting and statement of main results }

Let us introduce for brevity the following Hilbert spaces
$$
H=L^2(\Omega) \times L^2(\Omega)
$$
$$
W=H_0^1(\Omega)\times H_0^1(\Omega)
$$
$$
H_\Delta^3(\Omega)=\{ v\in H^3(\Omega) | v=\Delta v=0 \hbox{ on } \Gamma\},\ \ \  \|v\|^2_{H_\Delta^3(\Omega)}=\int_{\Omega}|\nabla\Delta v|^2dx
$$
$$
V=H_\Delta^3(\Omega)\cap H^2(\Omega) \times H^2(\Omega)
$$
$$
\widetilde{V}=(H^4(\Omega) \cap H_\Delta^3(\Omega))\times  (H_\Delta^3(\Omega) \cap H^2(\Omega)).
$$
Identifying $H$ with its dual, we obtain the diagram
$$
\widetilde{V} \subset V\subset W \subset  H=H'\subset W' \subset V' \subset \widetilde{V}'.
$$
We impose the following assumptions on $a$ and $g_i$\\
$\blacktriangleright$  The function $a:\Omega \rightarrow  \mathbb{R} $ is a nonnegative and bounded such that
 \begin{equation}\label{F1d}
 \begin{split}
& a(x)\in W^{1 , \infty } (\Omega).
\\ & \|a\|_{L^\infty(\Omega)}  <\min\Big \{\frac{1}{c'},1\Big\}
  \end{split}
  \end{equation}
  where $c'> 0$ (depending only on the geometry of $\Omega$) is  the constant
$$
\|\Delta  v\| \leq c'\|\nabla \Delta v\|,\,\,\,\,\,  \forall v\in H^3_\Delta(\Omega).
$$
$$
\|\nabla v\| \leq c\| \Delta v\|,\,\,\,\,\,  \forall v\in H^2_0(\Omega).
$$
 $\blacktriangleright\;\;g_i:\mathbb{R}\to \mathbb{R}$ be non decreasing convex function of class $\mathcal{C}^1$ such that there exists 
$\epsilon$ (sufficiently small), $c_i,\,\, \tau_i> 0,\,\, (i=1,2)$, and
 $G:{\mathbb{R}_+}\to {\mathbb{R}_+}$ is convex, increasing and of  class 
$\mathcal{C}^1(\mathbb{R}_+)\cap \mathcal{C}^2(]0,+\infty[)$ satisfying
 \begin{equation}\label{g1}
 \begin{gathered}
\text{$G(0)=0$ and $G$ is linear on $[0,\epsilon]$ or}\\
\text{$G'(0)=0$ and $G''>0 $  on $]0,\epsilon]$ such that}\\
c_1|s|\leq  |g_i(s)|\leq c_2|s|\quad \text{if } |s|>\epsilon \\
s^2+ g_i^2(s)\leq G^{-1}(sg_i(s))\quad \text{if } |s|\leq \epsilon,\\
\exists  \tau_1,\, \tau_2 >0,\,\, \tau_1 \leq  g_i'(s) \leq \tau_2,\,\,\,  \forall s \in  \mathbb{R}.
 \end{gathered}
\end{equation}
We are now in a position to state our  main  results.
\begin{theorem}\label{H}
Let $(u_1^0,u_2^0)\in \widetilde{V}$ and $(u_1^1,u_2^1)\in V$ arbitrarily. Assume that (\ref{F1d}) and (\ref{g1}) hold. Then, system (\ref{P})
has a unique weak solution satisfying
$$
(u_1,u_2)\in L^{\infty}(\mathbb{R}_+,\widetilde{V}),\,\,\,\,\,(u'_1,u'_2)\in L^{\infty}(\mathbb{R}_+,V)
$$
and 
$$
(u''_1,u''_2)\in L^{\infty}(\mathbb{R}_+,W)
$$
\end{theorem}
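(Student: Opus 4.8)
The plan is to construct the solution by the Faedo--Galerkin scheme and to pass to the limit using the dissipative structure of the system. First I would fix the orthonormal basis $\{w_j\}$ of eigenfunctions of the Dirichlet Laplacian on $\Omega$; since $-\Delta w_j=\lambda_j w_j$ with $w_j=0$ on $\Gamma$, one automatically has $\Delta w_j=0$ on $\Gamma$, so every finite combination lies in $H^3_\Delta(\Omega)$ and is admissible for both components and for all four boundary conditions $u_1=\Delta u_1=u_2=0$. Seeking $u_i^m(t)=\sum_{j\le m}d_{ij}^m(t)w_j$ and projecting (\ref{P}) onto $\spn\{w_1,\dots,w_m\}$ yields a system of second order ODEs whose nonlinear right-hand side is locally Lipschitz in the coefficients because $g_i\in\mathcal C^1$ with $g_i'\le\tau_2$; Cauchy--Lipschitz gives a local approximate solution, which the a priori bounds below extend to all of $\R_+$.

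The core of the proof is a hierarchy of energy identities, uniform in $m$, obtained by testing against well chosen multipliers and exploiting the sign of the damping. Testing the two equations by $-\Delta u_1^{m\prime}$ and $-\Delta u_2^{m\prime}$ gives, after integration by parts using $\Delta u_i^{m\prime}=0$ on $\Gamma$, the relation $\frac{d}{dt}\mathcal E_0^m+\int_\Omega g_1(\Delta u_1^{m\prime})\Delta u_1^{m\prime}+\int_\Omega g_2(\Delta u_2^{m\prime})\Delta u_2^{m\prime}=0$, in which the two coupling terms combine into the exact derivative $\frac{d}{dt}\int_\Omega a\,\Delta u_1^m\Delta u_2^m$; since $sg_i(s)\ge0$ this is dissipative, and $\mathcal E_0^m$ is coercive precisely because $\|a\|_{L^\infty}<\min\{1/c',1\}$ lets Cauchy--Schwarz together with $\|\Delta v\|\le c'\|\nabla\Delta v\|$ absorb the cross term. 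Differentiating the equations in $t$ and testing by $-\Delta u_i^{m\prime\prime}$ produces a second identity whose damping contribution is $\int_\Omega g_i'(\Delta u_i^{m\prime})(\Delta u_i^{m\prime\prime})^2\ge\tau_1\|\Delta u_i^{m\prime\prime}\|^2$, controlling $u_i^{m\prime\prime}$ in $W$, $u_1^{m\prime}$ in $H^3_\Delta$ and $u_2^{m\prime}$ in $H^2$. Finally, testing by $\Delta^2u_i^{m\prime}$ and integrating the damping by parts yields $\int_\Omega g_i'(\Delta u_i^{m\prime})|\nabla\Delta u_i^{m\prime}|^2\ge\tau_1\|\nabla\Delta u_i^{m\prime}\|^2$ together with the top order quantities $\|\Delta^2u_1^m\|$ and $\|\nabla\Delta u_2^m\|$, which is what delivers $u_1^m\in H^4$ and $u_2^m\in H^3_\Delta$. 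In each identity the coupling reduces to an exact time derivative up to remainders containing $\nabla a$, absorbed into the dissipation by Young's inequality using $a\in W^{1,\infty}(\Omega)$; collecting the three identities gives bounds uniform in $m$ and, by dissipativity, in $t$.

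With these bounds I would pass to the limit. Weak-$*$ compactness gives $u_i^m\rightharpoonup u_i$ in $L^\infty(\R_+,\widetilde V)$, $u_i^{m\prime}\rightharpoonup u_i'$ in $L^\infty(\R_+,V)$ and $u_i^{m\prime\prime}\rightharpoonup u_i''$ in $L^\infty(\R_+,W)$. The only nonlinear term is $g_i(\Delta u_i^{m\prime})$, and here the $\Delta^2u_i^{m\prime}$ estimate is decisive: it bounds $\nabla\Delta u_1^{m\prime}$ in $L^\infty(L^2)$ and $\nabla\Delta u_2^{m\prime}$ in $L^2(L^2)$, while the time differentiated estimate bounds $\Delta u_i^{m\prime\prime}$ in $L^2(L^2)$. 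An Aubin--Lions--Simon argument then yields strong convergence of $\Delta u_i^{m\prime}$ in $L^2(0,T;L^2(\Omega))$, whence $g_i(\Delta u_i^{m\prime})\to g_i(\Delta u_i')$ by continuity and the linear bound $|g_i(s)|\le\tau_2|s|$. Passing to the limit in the projected equations and checking the initial data (a consequence of the $\mathcal C([0,T];\cdot)$ regularity coming from the time derivative bounds) shows that $(u_1,u_2)$ is a weak solution with the asserted regularity.

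Uniqueness is the easiest step: given two solutions I set $w_i=u_i-v_i$, subtract the equations and test by $-\Delta w_i'$. Monotonicity of $g_i$ makes $\int_\Omega[g_i(\Delta u_i')-g_i(\Delta v_i')]\Delta w_i'\ge0$, the coupling is again absorbed by the smallness of $\|a\|_{L^\infty}$, and since $w_i(0)=w_i'(0)=0$ Gronwall forces $w_i\equiv0$. The main obstacle is the second paragraph: producing the top order a priori estimates compatibly with the nonlinear strong damping, since one must integrate $g_i(\Delta u_i')$ by parts to expose the coercive factor $g_i'\ge\tau_1$ and simultaneously dominate the $\nabla a$ coupling remainders by the resulting dissipation. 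This interplay, rather than the limit passage or uniqueness, is where the assumptions (\ref{F1d}) and (\ref{g1}) are used in full.
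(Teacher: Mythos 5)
Your proposal follows the same overall strategy as the paper: a Faedo--Galerkin scheme driven by exactly the same three multipliers ($-\Delta\dot u_i^k$, the second-order-in-time estimate tested against $-\Delta\ddot u_i^k$, and $\Delta^2\dot u_i^k$), the same use of the smallness condition $\|a\|_{L^\infty}<\min\{1/c',1\}$ to make the coupled energy coercive, and the same absorption of the $\nabla a$ remainders into the dissipation $\int_\Omega g_i'(\Delta\dot u_i^k)|\nabla\Delta\dot u_i^k|^2\geq\tau_1\|\nabla\Delta\dot u_i^k\|^2$. You differ, legitimately, in three sub-steps. (i) For the second-order-in-time estimate the paper works with translates $u_i^k(\cdot,t+\xi)-u_i^k(\cdot,t)$, divides by $\xi^2$ and lets $\xi\to0$, using only the monotonicity of $g_i$ to discard the damping difference, whereas you differentiate the Galerkin system in $t$ and use $g_i'\geq\tau_1$; both are valid since $g_i\in\mathcal{C}^1$, but both require bounding $\ddot u_i^k(0)$ by evaluating the equation at $t=0$ together with the convergences of the projected initial data (as in (\ref{p8T})--(\ref{p8F})) --- a step your sketch should make explicit, since it is where the hypothesis $(u_1^0,u_2^0)\in\widetilde V$, $(u_1^1,u_2^1)\in V$ actually enters. (ii) For the nonlinear limit the paper goes through a.e.\ convergence, Fatou's lemma, a uniform integrability estimate and Vitali's theorem to obtain $g_i(\Delta\dot u_i^m)\to g_i(\Delta u_i')$ in $L^1$; your route via Aubin--Lions--Simon plus the global Lipschitz bound $\tau_1\leq g_i'\leq\tau_2$ (which yields $|g_i(s)|\leq\tau_2|s|$ once one notes that (\ref{g1}) forces $g_i(0)=0$) is simpler and delivers strong $L^2$ convergence of the damping term directly --- the paper's heavier machinery is designed for dampings without a linear upper bound and is not needed under (\ref{g1}). (iii) You supply a uniqueness argument (difference of two solutions tested with $-\Delta w_i'$, monotonicity of $g_i$, Gronwall), which the theorem asserts but the paper's proof never actually carries out; this is a genuine and necessary addition. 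Your choice of the Dirichlet-Laplacian eigenbasis is also a useful refinement, since it guarantees that the multipliers $-\Delta\dot u_i^k$ and $\Delta^2\dot u_i^k$ remain admissible test functions in the finite-dimensional space, a point the paper leaves implicit.
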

\begin{theorem}
\label{G}
Let $(u^0_1, u^0_2)  \in \widetilde{V} $ and 
$(u^1_1, u_2^1)  \in  V $. Assume that (\ref{F1d}) and  (\ref{g1}) hold.  The energy of the unique solution of system \eqref{P}, given by  (\ref{E1}) decays as
\begin{equation}\label{77}
E(t)\leq \psi^{-1}\Big(h(t)+\psi(E(0))\Big),\,\, \forall  t\geq 0
\end{equation}
where $\psi(t)=\displaystyle\int_t^1 \frac{1}{\omega \varphi(s)}\,ds$ for $t>0$,\,\,\, $h(t)=0$ for $0\leq t\leq \frac{E(0)}{\omega \varphi(E(0))}$ and
$$
h^{-1}(t)=t+\frac{\psi^{-1}(t+\psi(E(0)))}{\varphi(\psi^{-1}(t+\psi(E(0))))} ,\,\, \forall  t\geq \frac{E(0)}{\varphi(E(0))}
$$
\begin{gather}\label{W1}
\varphi(t)=\begin{cases}
t & \text{if $G$ is linear on } [0,\varepsilon]\\
 tG'(\varepsilon_0t) & \text{if $G'(0)=0$ and $G''>0$ on } ]0,\varepsilon],
 \end{cases} \nonumber
\end{gather}
where  $\omega$  and $\varepsilon_0$  are positive constants.
\end{theorem}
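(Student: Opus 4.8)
The plan is to derive a nonlinear integral inequality for the energy and then integrate it by comparison with the ODE generated by $\psi$. The starting point is the dissipation identity established in Section~3: multiplying the first equation of \eqref{P} by $-\Delta u_1'$ and the second by $-\Delta u_2'$, integrating over $\Omega$ and using the boundary conditions $\Delta u_1=u_1=u_2=0$ on $\Gamma$, the biharmonic terms and the two coupling terms $a(x)\Delta u_i$ reorganize (the latter into the exact derivative $\frac{d}{dt}\int_\Omega a\,\Delta u_1\Delta u_2\,dx$), and one obtains
\[
E'(t)=-\int_\Omega g_1(\Delta u_1')\,\Delta u_1'\,dx-\int_\Omega g_2(\Delta u_2')\,\Delta u_2'\,dx\le 0,
\]
so the integrals $\Di_i(t):=\int_\Omega g_i(\Delta u_i')\Delta u_i'\,dx\ge 0$ measure the decay rate. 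The smallness hypothesis $\|a\|_{L^\infty}<\min\{1/c',1\}$ from \eqref{F1d}, combined with $\|\Delta v\|\le c'\|\nabla\Delta v\|$, makes the coupling term absorbable by $\tfrac12(\|\nabla\Delta u_1\|^2+\|\Delta u_2\|^2)$, so that $E(t)$ is equivalent to $\|\nabla\Delta u_1\|^2+\|\Delta u_2\|^2+\|\nabla u_1'\|^2+\|\nabla u_2'\|^2$; coercivity of $E$ is thus guaranteed.

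Next I would produce an observability-type estimate. Introducing $\Phi(t)=\int_\Omega(\nabla u_1'\cdot\nabla u_1+\nabla u_2'\cdot\nabla u_2)\,dx$ and differentiating along the system with the multipliers $-\Delta u_1$, $-\Delta u_2$ reproduces the elliptic energy $\|\nabla\Delta u_1\|^2+\|\Delta u_2\|^2$ with the correct sign, while generating the kinetic defect $-\|\nabla u_1'\|^2-\|\nabla u_2'\|^2$, the coupling, and damping remainders. Using Young's inequality, $a\in W^{1,\infty}(\Omega)$, and the inequality $\|\nabla v\|\le c\|\Delta v\|$ to control the kinetic terms by the damping level $\|\Delta u_i'\|$, the perturbed functional $\LL=NE+\Phi$ with $N$ large satisfies $\LL'(t)\le -c\,E(t)+C\sum_i\int_\Omega(|\Delta u_i'|^2+g_i^2(\Delta u_i'))\,dx$, whence, integrating on $[S,T]$,
\[
\int_S^T E(t)\,dt\le C\,E(S)+C\int_S^T\!\int_\Omega\big(|\Delta u_1'|^2+|\Delta u_2'|^2+g_1^2(\Delta u_1')+g_2^2(\Delta u_2')\big)\,dx\,dt.
\]

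The heart of the argument is to convert this right-hand side into an expression in $E$ via \eqref{g1}. Splitting $\Omega$ into $\Omega_i^{+}=\{|\Delta u_i'|>\epsilon\}$ and $\Omega_i^{-}=\{|\Delta u_i'|\le\epsilon\}$, on $\Omega_i^{+}$ the two-sided bound $c_1|s|\le|g_i(s)|\le c_2|s|$ yields $|\Delta u_i'|^2+g_i^2(\Delta u_i')\le C\,g_i(\Delta u_i')\Delta u_i'$, directly absorbed by $\Di_i$; on $\Omega_i^{-}$ the structural inequality $s^2+g_i^2(s)\le G^{-1}(sg_i(s))$ and Jensen's inequality applied to the concave $G^{-1}$ (using $G(0)=0$) give a bound of the form $C|\Omega|\,G^{-1}\!\big(\tfrac{1}{|\Omega|}\Di_i(t)\big)$. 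Feeding these into the observability estimate produces an inequality of Lasiecka--Tataru type linking $\int_S^T E$ to $E(S)$ and to $G^{-1}$ of the energy drop; choosing the window $T-S$ adapted to $E(S)$ and invoking the nonlinear integral inequality of Section~2 (in the spirit of Komornik and Alabau-Boussouira) turns it into the differential relation governed by $\varphi$, whose integration gives exactly $E(t)\le\psi^{-1}(h(t)+\psi(E(0)))$ with $\psi(t)=\int_t^1\frac{ds}{\omega\varphi(s)}$ and $h$ as stated.

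The main obstacle will be the small-amplitude regime: making the convexity/Jensen step quantitative so that the constants $\omega$ and $\varepsilon_0$ entering $\varphi(t)=tG'(\varepsilon_0 t)$ emerge with the right weights, and checking that the coupling and damping remainders are genuinely dominated by $\Di_1+\Di_2$ (not by $E$ itself). This is precisely where $\|a\|_{L^\infty}<\min\{1/c',1\}$, the two Poincaré-type inequalities, and the regularity $(u_1',u_2')\in V$ (which legitimizes the $\Delta u_i'$ pairings and integrations by parts) are indispensable.
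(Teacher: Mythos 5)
Your skeleton coincides with the paper's up to one decisive ingredient: the paper does not use the plain multipliers $-\Delta u_1,\,-\Delta u_2$ (equivalently your $\Phi$), but the \emph{weighted} multipliers $-\frac{\varphi(E)}{E}\Delta u_i$, and this weight is not cosmetic. With the weight, the observability estimate reads $\int_S^T\varphi(E)\,dt\le c\varphi(E(S))+(\text{damping terms})$, and the residual small-amplitude term produced by Jensen,
\[
\int_S^T\frac{\varphi(E)}{E}\,|\Omega|\,G^{-1}\Big(\frac{1}{|\Omega|}\int_\Omega\Delta u_i'\,g_i(\Delta u_i')\,dx\Big)\,dt,
\]
is split by Young's inequality for the convex conjugate, $st\le G^*(s)+G(t)$, into $\int_S^T G^*\big(\tfrac{\varphi(E)}{E}\big)\,dt+\int_S^T\int_\Omega\Delta u_i'\,g_i(\Delta u_i')\,dx\,dt$; the choice $\varphi(s)=sG'(\epsilon s)$ is made precisely so that $G^*\big(\tfrac{\varphi(s)}{s}\big)=\epsilon sG'(\epsilon s)-G(\epsilon s)\le\epsilon\varphi(s)$, which lets the first piece be absorbed for $\epsilon$ small and the second be bounded by $E(S)$ via $-E'=\sum_i\int_\Omega\Delta u_i'g_i(\Delta u_i')\,dx$. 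The outcome $\int_S^T\varphi(E)\,dt\le cE(S)$ is exactly the hypothesis of the integral-inequality lemma (Lemma \ref{0}), which then yields \eqref{77}.

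Your unweighted version stalls at $\int_S^TE\,dt\le CE(S)+C\int_S^TG^{-1}\big(\tfrac{1}{|\Omega|}\mathcal{D}_i(t)\big)\,dt$: the concavity of $G^{-1}$ works against you here (when $\mathcal{D}_i$ is small, $G^{-1}(\mathcal{D}_i)$ is large relative to it), so this term is not dominated by $\int\mathcal{D}_i\le E(S)$ and no choice of constants closes the estimate. The ``window $T-S$ adapted to $E(S)$'' you invoke is the Lasiecka--Tataru difference-inequality route, which is a different device from the lemma actually available in Section 3 (whose hypothesis is $\int_s^{\infty}\varphi(E)\,dt\le E(s)$, not a bound on $\int_s^{\infty}E\,dt$), and you would still need to verify that it reproduces the specific rate $\varphi(t)=tG'(\varepsilon_0t)$ claimed in the statement. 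To repair the proof along the paper's lines, insert the weight $\frac{\varphi(E)}{E}$ into your multiplier from the start (since $E$ is non-increasing and $s\mapsto\varphi(s)/s$ is non-decreasing, the boundary and derivative terms it generates are still $\le c\,\varphi(E(S))$) and add the Legendre-transform step above.
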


\begin{lemma}\label{L1A}
The energy functional associated to the solution of the problem (\ref{P}) given by the following formula%
\begin{equation}\label{E1}
E(t)=\frac{1}{2} \int_\Omega |\nabla u'_{1}|^2+|\nabla u'_{2}|^2+|
\nabla \Delta u_{1}|^2+|\Delta u_{2}|^{2}\,dx+ \int_{\Omega } a(x)\Delta u_{1}\Delta u_{2}dx,
\end{equation}
is a nonnegative.
\end{lemma}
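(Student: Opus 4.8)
The plan is to prove that $E(t)\ge 0$ by controlling the only term in \eqref{E1} that is not manifestly nonnegative, namely the coupling integral $\int_\Omega a(x)\Delta u_1\,\Delta u_2\,dx$. The four quadratic contributions $|\nabla u'_1|^2$, $|\nabla u'_2|^2$, $|\nabla\Delta u_1|^2$ and $|\Delta u_2|^2$ integrate to nonnegative quantities, so the entire difficulty reduces to showing that the indefinite cross term cannot overwhelm $\tfrac12\int_\Omega\big(|\nabla\Delta u_1|^2+|\Delta u_2|^2\big)\,dx$.

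First I would bound the coupling term using the $L^\infty$ bound on $a$ provided by \eqref{F1d}, together with the Cauchy--Schwarz inequality, to get
$$
\Big|\int_\Omega a(x)\Delta u_1\,\Delta u_2\,dx\Big|\le \|a\|_{L^\infty(\Omega)}\,\|\Delta u_1\|\,\|\Delta u_2\|.
$$
The crucial step is then to convert $\|\Delta u_1\|$ into the norm that actually appears in the energy. Since the solution produced by Theorem~\ref{H} satisfies $(u_1,u_2)\in L^\infty(\mathbb{R}_+,\widetilde{V})$, we have $u_1(t)\in H^3_\Delta(\Omega)$, so the inequality $\|\Delta v\|\le c'\|\nabla\Delta v\|$ applies and yields
$$
\Big|\int_\Omega a(x)\Delta u_1\,\Delta u_2\,dx\Big|\le c'\,\|a\|_{L^\infty(\Omega)}\,\|\nabla\Delta u_1\|\,\|\Delta u_2\|.
$$
Applying Young's inequality $\|\nabla\Delta u_1\|\,\|\Delta u_2\|\le\tfrac12\big(\|\nabla\Delta u_1\|^2+\|\Delta u_2\|^2\big)$ and substituting back into \eqref{E1}, I would collect terms to obtain
$$
E(t)\ge \tfrac12\big(\|\nabla u'_1\|^2+\|\nabla u'_2\|^2\big)+\tfrac{1-c'\|a\|_{L^\infty(\Omega)}}{2}\big(\|\nabla\Delta u_1\|^2+\|\Delta u_2\|^2\big).
$$

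The main (and essentially only) obstacle is ensuring that the coefficient $1-c'\|a\|_{L^\infty(\Omega)}$ is strictly positive; this is exactly what the smallness hypothesis $\|a\|_{L^\infty(\Omega)}<\min\{1/c',1\}$ in \eqref{F1d} guarantees, since it forces $c'\|a\|_{L^\infty(\Omega)}<1$. Consequently every term on the right-hand side is nonnegative and $E(t)\ge 0$, which completes the proof. I expect the remaining computations to be routine, the only point requiring care being the symmetric choice of the Young parameter, so that the retained terms $\|\nabla\Delta u_1\|^2$ and $\|\Delta u_2\|^2$ appear with matching coefficients $\tfrac12$ and can be absorbed against the positive part of the energy.
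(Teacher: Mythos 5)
Your proposal is correct and follows essentially the same route as the paper: both bound the coupling term by $\tfrac{c'}{2}\|a\|_{L^\infty(\Omega)}\big(\|\nabla\Delta u_1\|^2+\|\Delta u_2\|^2\big)$ using the inequality $\|\Delta v\|\le c'\|\nabla\Delta v\|$ together with a (weighted) Young/Cauchy--Schwarz step, and then absorb it via the smallness condition \eqref{F1d}. The only cosmetic difference is that the paper applies a weighted Young inequality pointwise before invoking the Poincar\'e-type estimate, whereas you apply Cauchy--Schwarz at the integral level first; the resulting lower bound for $E(t)$ is identical.
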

\begin{proof}
Multiplying the first equation in (\ref{P}) by $-\Delta u'_1$ and the second equation by $-\Delta u'_2$, integrating (by parts) over $\Omega$, we obtain
\begin{equation*}
\begin{split}
\frac{1}{2}\frac{d}{dt} &\Big[\int_\Omega |\nabla u'_{1}|^2+|\nabla u'_{2}|^2+|
\nabla \Delta u_{1}|^2+|\Delta u_{2}|^{2}\,dx+ 2\int_{\Omega } a(x)\Delta u_{1}\Delta u_{2}\,dx\Big]\\&
=-\int_\Omega \Delta u'_1g_1(\Delta u'_1)+\Delta u'_2g_2(\Delta u'_2)\,dx.
\end{split}
\end{equation*}
Using H\"{o}lder's inequality, Sobolev embedding and the condition (\ref{F1d}), we get
\begin{equation*}
\begin{split}
 \int_{\Omega } a(x)\Delta u_{1}\Delta u_{2}dx &\geq -\frac{1}{2}\|a\|_{L^\infty(\Omega)} \frac{\sqrt{c'}}{\sqrt{c'}} \int_\Omega  |\Delta u_{1}\Delta u_{2}|\,dx
 \\&\geq -\frac{1}{2}\|a\|_{L^\infty(\Omega)} \int_\Omega  \frac{1}{c'}|\Delta u_{1}|^2 +c'|\Delta u_{2}|^2\,dx
  \\&\geq -\frac{1}{2}\|a\|_{L^\infty(\Omega)} \int_\Omega  \frac{c'^2}{c'}|\nabla \Delta u_{1}|^2 +c'|\Delta u_{2}|^2\,dx
\\& \geq-\frac{c'}{2}\|a\|_{L^\infty(\Omega)} \int_\Omega   |\nabla\Delta  u_{1}|^2 +|\Delta u_{2}|^2\,dx
\end{split}
\end{equation*}
then
\begin{equation*}
\begin{split}
E(t)&\geq \frac{1}{2} \int_\Omega |\nabla u'_{1}|^2+|\nabla u'_{2}|^2+(1-c'\|a\|_{L^\infty(\Omega)})(|
\nabla \Delta u_{1}|^2+|\Delta u_{2}|^{2})\,dx\\&\geq 0.
\end{split}
\end{equation*}
Hence, $E$ is a nonnegative function and its derivative is 
\begin{equation}\label{F44}
E'(t)=-\int_\Omega \Delta u'_1g_1(\Delta u'_1)+\Delta u'_2g_2(\Delta u'_2)\,dx.
\end{equation}
\end{proof}

\section{Some technical lemmas}
\begin{lemma}\label{0}
Let $E: \mathbb{R}^+ \rightarrow \mathbb{R}^+$  be a non-increasing differentiable
function, $\lambda \in \mathbb{R}^+ $ and $\varphi: \mathbb{R}^+ \rightarrow \mathbb{R}^+$ a convex and increasing function such that $\varphi(0)=0$. Assume that
\begin{equation*}
\left\{
\begin{aligned}
& \int_s ^{+\infty}\varphi(E(t))\,dt \leq  E(s),
 \  & \forall  s\geq 0& \\
 &E'(t)\leq \lambda E(t) \ & \forall  t\geq 0.&\\
\end{aligned}
\right.
\end{equation*}
Then $E$ satisfies the following estimate:
\begin{equation}\label{70K}
E(t)\leq e^{\tau_0 \lambda} d^{-1}\Big(e^{\lambda(t-h(t))}\varphi\Big(\psi^{-1}\Big(h(t)+\psi(E(0))\Big)\Big),\,\, \forall  t\geq 0
\end{equation}
where $$
\psi(t)=\int_t^1 \frac{1}{\varphi(s)}\,ds,\,\,\,\,\,\,\,  \forall  t\geq 0
$$ 
\begin{equation*}
d(t)=\left\{
\begin{aligned}
& \Psi(t),
  \  & \mbox{if}\,\,\, \lambda=0&  \\
 &\int_0^t\frac{\varphi(s)}{s}\,ds \ & \mbox{if}\,\,\,  \lambda >0&\\
\end{aligned}
\right.
\end{equation*}
\begin{equation*}
h(t)=\left\{
\begin{aligned}
& K^{-1}(D(t)),
  \  & \forall  t>T_0& \\
 &0  \  & \forall  t \in [0,T_0]& \\
\end{aligned}
\right.
\end{equation*}
$$
K(t)=D(t)+\frac{\psi^{-1}(t+\psi(E(0)))}{\varphi(\psi^{-1}(t+\psi(E(0))))} e^{\lambda t},\,\, \forall  t\geq 0
$$
$$
D(t)=\int_0^t e^{\lambda s}\,ds\,\,\,\,\,\,\,  \forall  t\geq 0
$$
$$
T_0=D^{-1}\Big(\frac{E(0)}{\varphi(E(0))}\Big),\,\,\,\,\, \tau_0=\left\{
\begin{aligned}
& 0 
 \  & \forall  t >T_0& \\
 &T_0 \ & \forall  t \in [0,T_0]&\\
\end{aligned}
\right.
$$
\end{lemma}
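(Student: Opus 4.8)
The plan is to split the argument into two stages: the first converts the integral inequality into a scalar autonomous differential inequality that integrates explicitly, and the second upgrades the resulting bound on a weighted average of $\varphi(E)$ into a pointwise bound on $E$, using the growth condition $E'\le\lambda E$ and the convexity of $\varphi$. First I would set
\[
L(s)=\int_s^{+\infty}\varphi(E(t))\,dt ,\qquad s\ge 0 .
\]
The first hypothesis guarantees $L(s)\le E(s)<+\infty$, so $L$ is a well-defined, nonnegative, non-increasing and absolutely continuous function with $L'(s)=-\varphi(E(s))$ for almost every $s$. Since $\varphi$ is increasing and $L(s)\le E(s)$, we have $\varphi(L(s))\le\varphi(E(s))=-L'(s)$, that is the autonomous inequality $L'(s)+\varphi(L(s))\le 0$. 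This is the key reduction: from now on all of the hypotheses are packaged into one scalar inequality.

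Next I would integrate it by separation of variables. Because $\psi'(t)=-1/\varphi(t)$, the inequality $-L'/\varphi(L)\ge 1$ is exactly $\tfrac{d}{ds}\psi(L(s))\ge 1$, so $\psi(L(s))\ge s+\psi(L(0))$. As $\varphi>0$ on $(0,\infty)$ the function $\psi$ is strictly decreasing, hence so is $\psi^{-1}$, and together with $L(0)\le E(0)$ this yields
\[
L(s)\le\psi^{-1}\!\big(s+\psi(E(0))\big),\qquad s\ge 0 .
\]
This already exhibits the profile $\psi^{-1}(\,\cdot\,+\psi(E(0)))$ that appears in the statement and in Theorem \ref{G}.

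The last and most delicate stage is to pass from this control of $L$ to a pointwise estimate for $E$. The idea is to fix $t$, pick an earlier instant $h(t)\le t$, and compare $L(h(t))\ge\int_{h(t)}^{t}\varphi(E(\sigma))\,d\sigma$ with a lower bound for the right-hand integral obtained from the monotonicity of $E$, from the growth estimate $E(\sigma)\ge E(t)e^{-\lambda(t-\sigma)}$ that $E'\le\lambda E$ provides, and from Jensen's inequality for the convex $\varphi$ (using also $\varphi(\theta r)\le\theta\varphi(r)$ for $\theta\in[0,1]$, valid since $\varphi(0)=0$). Balancing the window length $t-h(t)$ against the decay of $L$ is what forces the auxiliary functions: $D(t)=\int_0^t e^{\lambda s}\,ds$ records the exponential weight coming from the growth bound, $K$ is the resulting implicit relation to be inverted, $h(t)=K^{-1}(D(t))$ for $t>T_0$ is the optimal choice of window, the threshold $T_0=D^{-1}(E(0)/\varphi(E(0)))$ and the correction $e^{\tau_0\lambda}$ separate the short-time regime $t\le T_0$ (no window available, $h\equiv0$) from the long-time regime, and the two branches of $d$ (namely $d=\psi$ when $\lambda=0$ and $d(t)=\int_0^t\varphi(s)/s\,ds$ when $\lambda>0$) correspond to whether the exponential weight is active. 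Carrying this out produces exactly
\[
E(t)\le e^{\tau_0\lambda}\,d^{-1}\!\Big(e^{\lambda(t-h(t))}\,\varphi\big(\psi^{-1}(h(t)+\psi(E(0)))\big)\Big).
\]

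I expect this third stage to be the real obstacle. The first two stages are essentially mechanical once $L$ is introduced, whereas the last one requires optimizing the free parameter $h(t)$, checking that $K$ and $d$ are invertible on the relevant ranges, verifying that the chosen window satisfies $h(t)<t$, and matching the short- and long-time regimes continuously at $t=T_0$; the convexity bookkeeping hidden in the Jensen step is where the argument is genuinely technical.
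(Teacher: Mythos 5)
The paper states Lemma \ref{0} without proof (it is essentially Guesmia's weighted integral inequality, quoted from the literature), so there is no in-paper argument to measure you against; I can only judge the proposal on its own terms. Your first two stages are correct and complete: with $L(s)=\int_s^{+\infty}\varphi(E(t))\,dt$ one has $L\le E$, hence $L'+\varphi(L)\le 0$, and since $\psi'=-1/\varphi$ this integrates to $L(s)\le\psi^{-1}(s+\psi(E(0)))$. That much is standard.

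The genuine gap is your third stage, which you yourself flag as ``the real obstacle'' and then do not carry out: it is precisely there that every object appearing in the conclusion ($d$, $K$, $h$, $D$, $T_0$, $\tau_0$) is produced, and asserting that ``carrying this out produces exactly'' the stated bound is not a proof. Moreover, the plan as described is aimed the wrong way analytically. The inequality $\varphi(\theta r)\le\theta\varphi(r)$ you invoke is an \emph{upper} bound on $\varphi$ at small arguments, whereas to convert $L(h)\ge\int_{h}^{t}\varphi(E(\sigma))\,d\sigma\ge\int_{h}^{t}\varphi\big(E(t)e^{-\lambda(t-\sigma)}\big)\,d\sigma$ into information about $E(t)$ you need a \emph{lower} bound on the integrand; and Jensen's inequality, applied honestly, yields an estimate of the form $E(t)\le c(t,h)\,\varphi^{-1}\big(L(h)/(t-h)\big)$, which is not the stated $d^{-1}(\cdots)$ form. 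The $d^{-1}$ comes from a different device that your sketch never touches: for $d(\tau)=\int_0^\tau\varphi(s)/s\,ds$ one has the exact identity $\frac{d}{d\sigma}\,d\big(E(t)e^{-\lambda(t-\sigma)}\big)=\lambda\,\varphi\big(E(t)e^{-\lambda(t-\sigma)}\big)$, so the integral evaluates in closed form, and the homogeneity bound $d(\theta x)\le\theta\,d(x)$ (which \emph{does} follow from $\varphi(\theta r)\le\theta\varphi(r)$) gives $d(E(t))\,e^{-\lambda(t-h)}D(t-h)\le L(h)$. The definition of $K$ and the choice $h(t)=K^{-1}(D(t))$ are then exactly calibrated so that $D(t-h(t))=\psi^{-1}(h+\psi(E(0)))\big/\varphi\big(\psi^{-1}(h+\psi(E(0)))\big)$, which produces the claimed estimate for $t>T_0$, with $T_0$ and $\tau_0$ covering the initial interval where no window $h<t$ is available. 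As written, your proposal proves only the intermediate bound on $L$, not the lemma.
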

\section{Proof of  Theorem \ref{H} }
We will use the Faedo-Galerkin method \cite{Lio} to prove the existence of a global solutions. Let $T > 0$  be fixed and
denote by $V^k$ the space generated by $\{w^1_i, w^2_i,... ,w^k_i\}$, where the set $\{w_i^k,\,\, k\in \mathbb{N}\} $ is a
basis of $\widetilde{V}$.\\
We construct approximate solution $u_i^k,\,\,  k = 1, 2, 3,.....$ in the form
$$ 
u_i^k(x,t)=\sum_{j=1}^k c^{j k }(t)w_i^j(x),
$$
 where $c^{jk}\,\, (j = 1, 2, . . .,k)$ are determined by the following ordinary differential
equations
\small{\begin{equation}\label{PO}
\left\{
\begin{aligned}
&  (\ddot{u}^k_1+ \Delta^2 u^k_1-a(x) \Delta u^k_2-g_1(\Delta  \dot{u}^k_1) ,w_1^j)=0
 \  &\forall  w_j^1\in V^k& \\
 &(\ddot{u}^k_2-\Delta u^k_2-a(x) \Delta  u^k_1-g_2(\Delta  \dot{u}^k_2) ,w_2^j)=0 \ &\forall w_j^2 \in V^k&
 \\
&u^k_i(0)=u_{i}^{0k}, \, \,\, \dot{u}_{i}^k(0)=u_{i}^{1k},\   &x\in \Omega,\,\,\,\,\, i=1,2& \\
\end{aligned}
\right.
\end{equation}}
 with initial conditions
\begin{equation}\label{p7}
u^k_1(0)=u^{0k}_1=\sum_{j=1}^k\langle u_1^0,w^j_1\rangle w^j_1\to u_1^0,\quad \text{in }
H^4(\Omega) \cap H^3_\Delta (\Omega) \  \text{ as } k\to +\infty,
\end{equation}
\begin{equation}\label{p72}
u^k_2(0)=u^{0k}_2=\sum_{j=1}^k\langle u_2^0,w^j_2\rangle w^j_2\to u_2^0,\quad \text{in }
H^3_\Delta (\Omega) \cap H^2(\Omega)\text{ as } k\to +\infty,
\end{equation}
\begin{equation}\label{p8}
\dot{u}^k_1(0)=u^{1k}_1=\sum_{j=1}^k\langle u^1_1,w^j_1\rangle w^j_1\to u_1^1,\quad \text{in }
H^3_\Delta (\Omega) \cap  H^2(\Omega) \text{ as } k\to +\infty.
\end{equation}
\begin{equation}\label{p82}
\dot{u}^k_2(0)=u^{1k}_2=\sum_{j=1}^k\langle u^1_2,w^j_2\rangle w^j_2\to u_2^1,\quad \text{in }
H^2(\Omega)\text{ as } k\to +\infty.
\end{equation}
\begin{equation}\label{p8T}
-\Delta^2u^{0k}_1 +a(x) \Delta u^{0k}_2+g_1(\Delta u^{1k}_1)\longrightarrow -\Delta^2u^0_1 +a(x)\Delta u^0_2+g_1(\Delta u^1_1),\quad \text{in }
H^{1}_{0}(\Omega)\text{ as } k\to +\infty.
\end{equation}
\begin{equation}\label{p8F}
\Delta u^{0k}_2+a(x) \Delta u^{0k}_1 +g_2(\Delta u^{1k}_2)\longrightarrow \Delta u^0_2+a(x)\Delta u^0_1 +g_2(\Delta u^1_2),\quad \text{in }
H^{1}_{0}(\Omega)\text{ as } k\to +\infty.
\end{equation}
First, we are going to use some a priori estimates to show that $ t_k = \infty$. Then, we will show that the
sequence of solutions to (\ref{PO}) converges to a solution of (\ref{P}) with the claimed smoothness.\\
Choosing  $w^j_i=-2\Delta \dot{u}_i^k$  in (\ref{PO}), we obtain
\begin{equation}\label{EA0}
\begin{split}
&\frac{d}{dt}\int_\Omega |\nabla \dot{u}^k_{1}|^2+|\nabla \dot{u}^k_{2}|^2+|
\nabla \Delta u^k_{1}|^2+|\Delta u^k_{2}|^{2}\,dx+2 a(x)\Delta  u^k_{1}\Delta u^k_{2}\,dx\\&+2\int_\Omega \Delta \dot{u}^k_1 g_1(\Delta \dot{u}^k_1)\,dx
 +2\int_\Omega \Delta \dot{u}^k_2 g_2(\Delta \dot{u}^k_2)\,dx=0,
 \end{split}
\end{equation}
and choosing  $w^j_i=\Delta^2 \dot{u}_i^k$  in (\ref{PO}), implies
\begin{equation}\label{M1}
\begin{split}
&\frac{d}{dt}\int_\Omega |\Delta \dot{u}^{k}_{1}|^2+|\Delta \dot{u}^{k}_{2} |^2+|\Delta^2 u^{k}_{1} |^2+|\nabla \Delta u^{k}_{2} |^2
+2a(x)\nabla  \Delta u_1^k\nabla \Delta u_2^k\,dx\\&
+2\int_\Omega \nabla a(x)\Delta u_2^k\nabla \Delta \dot {u}_1^k\,dx+2\int_\Omega \nabla a(x)\Delta u_1^k\nabla \Delta \dot {u}_2^k\,dx
\\&
+2\int_\Omega  |\nabla \Delta \dot{u}^{k}_{1}|^2 g'_1(\Delta \dot{u}^k_1)\,dx+2\int_\Omega  |\nabla \Delta \dot{u}^{k}_{2} |^2 g_2'(\Delta \dot{u}^k_2)\,dx=0
\end{split}
\end{equation}
Summing (\ref{EA0}) and (\ref{M1}), we obtain
\begin{equation}\label{MM1}
\begin{split}
&\frac{d}{dt}\int_\Omega \{|\Delta \dot{u}^{k}_{1} |^2+|\Delta \dot{u}^{k}_{2}|^2+ |\nabla \dot{u}^k_{1}|^2+|\nabla \dot{u}^k_{2}|^2+|\Delta^2 u^{k}_{1} |^2+|\nabla \Delta u^{k}_{2} |^2|+
\nabla \Delta u^k_{1}|^2+|\Delta u^k_{2}|^{2}\}\,dx
\\&+2\frac{d}{dt}\int_\Omega \{a(x)\Delta  u^k_{1}\Delta u^k_{2}+a(x)\nabla  \Delta u_1^k\nabla \Delta u_2^k\}\,dx+2\int_\Omega \Delta \dot{u}^k_1 g_1(\Delta \dot{u}^k_1)\,dx
 +2\int_\Omega \Delta \dot{u}^k_2 g_2(\Delta \dot{u}^k_2)\,dx\\&
+2\int_\Omega \nabla a(x)\Delta u_2^k\nabla \Delta \dot {u}_1^k\,dx+2\int_\Omega \nabla a(x)\Delta u_1^k\nabla \Delta \dot {u}_2^k\,dx
\\&
+2\int_\Omega  |\nabla \Delta \dot{u}^{k}_{1}|^2 g_1'(\Delta \dot{u}^k_1)\,dx+2\int_\Omega  |\nabla \Delta \dot{u}^{k}_{2}|^2 g_2'(\Delta \dot{u}^k_2)\,dx=0.
\end{split}
\end{equation}
Using H\"{o}lder's inequality and Sobolev embedding, we have
\begin{equation}\label{V1}
\begin{split}
&2\Big|\int_\Omega  a(x)\Delta u_2^k\Delta u_1^k\,dx\Big|\leq 2\frac{\sqrt{c'}}{\sqrt{c'}}\int_\Omega |a(x)||\Delta u_2^k|| \Delta u_1^k|\,dx
\\&\leq  c'\|a\|  \int_\Omega  |\nabla \Delta u^{k}_{1}(x,t) |^2 \,dx+ c'\|a\| \int_\Omega |\Delta u_2^k(x,t)|^2\,dx
\end{split}
\end{equation}
and
\begin{equation}\label{M4}
\begin{split}
&\Big| 2 \int_\Omega a(x) \nabla \Delta u_1^k\nabla \Delta u_2^k\,dx\Big|\\&\leq 2\|a\|\int_\Omega  |\nabla \Delta u_1^k||\nabla \Delta u_2^k|\,dx
\\&\leq 
\|a\| \int_\Omega  |\nabla \Delta u^{k}_{1}|^2 \,dx+\|a\|\int_\Omega |\nabla \Delta u_2^k|^2\,dx.
\end{split}
\end{equation}
By  H\"{o}lder's inequality, Sobolev embedding and the condition (\ref{g1}), we get
\begin{equation}\label{M2}
\begin{split}
&2\Big|\int_\Omega \nabla a(x)\Delta u_2^k\nabla \Delta \dot {u}_1^k\,dx\Big|\leq 2\int_\Omega |\nabla a(x)||\Delta u_2^k||\nabla \Delta \dot {u}_1^k|\,dx\\&
\leq  2\int_\Omega |\nabla a(x)||\Delta u_2^k||\nabla \Delta \dot {u}_1^k|\frac{\sqrt{g_1'(\Delta \dot{u}^k_1)}}{\sqrt{\tau_1}}\,dx\\&\leq 
\int_\Omega  |\nabla \Delta \dot{u}^{k}_{1} |^2 g_1'(\Delta \dot{u}^k_1)\,dx+\frac{1}{\tau_1}\|\nabla a\|^2\int_\Omega |\Delta u_2^k|^2\,dx
\end{split}
\end{equation}
Similarly, we have
\begin{equation}\label{M3}
\begin{split}
2\Big|\int_\Omega \nabla a(x)\Delta u_1^k\nabla \Delta \dot {u}_2^k\,dx\Big|
&\leq 
\int_\Omega  |\nabla \Delta \dot{u}^{k}_{2} |^2 g_2'(\Delta \dot{u}^k_2)\,dx+\frac{1}{\tau_1}\|\nabla a\|^2\int_\Omega |\Delta u_1^k|^2\,dx
\\&\leq 
\int_\Omega  |\nabla \Delta \dot{u}^{k}_{2}|^2 g_2'(\Delta \dot{u}^k_2)\,dx+\frac{c'}{\tau_1}\|\nabla a\|^2\int_\Omega |\nabla \Delta u_1^k|^2\,dx
\end{split}
\end{equation}
Reporting (\ref{V1})-(\ref{M3}), into (\ref{MM1}) and integrating over $(0,t)$,  we find
\begin{equation*}
\begin{split}
&F^k(t)+2\int_0^t \int_\Omega \Delta \dot{u}^k_1(s)g_1(\Delta \dot{u}^k_1(s))\,dx\,dt
 +2\int_0^t\int_\Omega \Delta \dot{u}^k_2(s) g_2(\Delta \dot{u}^k_2(s))\,dx\,dt
\\&+\int_0^t\int_\Omega  |\nabla \Delta \dot{u}^{k}_{1}(s) |^2 g'_1(\Delta \dot{u}^k_1(s))\,dx\,dt+
\int_0^t\int_\Omega  |\nabla \Delta \dot{u}^{k}_{2}(s) |^2 g'_2(\Delta \dot{u}^k_2(s))\,dx\,dt
\\&\leq  F^k(0)+C_1\int_0^t F^k(s)\,dx\,ds
,\,\,\,\,\,\, \forall t\in [0,t_k)
\end{split}
\end{equation*}
where 
\begin{equation*}
\begin{split}
F^k(t)&=\int_\Omega |\Delta \dot{u}^{k}_{1}(t)|^2+|\Delta \dot{u}^{k}_{2}(t) |^2+|\nabla \dot{u}^{k}_{1}(t)|^2+|\nabla \dot{u}^{k}_{2}(t)|^2+|\Delta^2 u^{k}_{1}(t)|^2\,dx\\&+(1-c'\|a\|-|a\|)\int_\Omega|\nabla \Delta u^{k}_{1}(t) |^2\,dx+(1-c'\|a\|)\int_\Omega  |\Delta u^{k}_{2}(t) |^2\,dx+(1-\|a\|)\int_\Omega  |\nabla \Delta u^{k}_{2}(t) |^2\,dx
\end{split}
\end{equation*}
and $C_1$  is a positive constant depending only on $\|a\|,\,\, \|\nabla a\|$ and $\tau_1$.\\
So that, thanks to the monotonicity condition on the function $g_i$ and using Gronwall's lemma, we
conclude that
\begin{equation}\label{S1}
 u^k_1\,\,\,\,\, \hbox{is bounded in } \,\,\,\,\,\,\, L^\infty(0,T;H^4(\Omega)\cap H^3_\Delta (\Omega))
\end{equation}
\begin{equation}\label{S2}
 u^k_2\,\,\,\,\, \hbox{is bounded in } \,\,\,\,\,\,\, L^\infty(0,T;H^3_\Delta (\Omega)\cap H^2 (\Omega))
\end{equation}
\begin{equation}\label{S3}
 \dot{u}^k_{1}\,\,\,\,\, \hbox{is bounded in } \,\,\,\,\,\,\, L^\infty(0,T;H^2(\Omega)\cap H^1_0(\Omega))
 \end{equation}
 \begin{equation}\label{S4}
 \dot{u}^k_{2}\,\,\,\,\, \hbox{is bounded in } \,\,\,\,\,\,\, L^\infty(0,T;H^2(\Omega)\cap H^1_0 (\Omega))
 \end{equation}
 \begin{equation}\label{S44}
 \Delta  \dot{u}^k_i g_i(\Delta \dot{u}^k_i) \,\,\,\,\, \hbox{is bounded in } \,\,\,\,\,\,\,L^1(\mathcal{A}).
 \end{equation}
 where $\mathcal{A}=\Omega\times(0,T)$.\\
We assume first $t<T$ and let $0<\xi <T-t$.
Set 
$$u_i^{k\xi}(x, t) =u_i^k(x,t+\xi),$$ 
$$U^{k\xi}=u_1^k(x,t+\xi)-u_1^k(x,t),$$
and 
$$D^{k\xi}=u_2^k(x,t+\xi)-u_2^k(x,t).$$
 Then, $U^{k\xi}$ solves the differential equation
\begin{equation}\label{L1}
(\ddot{U}^{k\xi}+\Delta^2U^{k\xi}-a(x)\Delta D^{k\xi} -(g_1(\Delta \dot{u}^{k\xi}_1)-g_1(\Delta \dot{u}^{k}_1)) ,w^j_1)=0,\,\,\,\,\,  \forall w^j_1\in V^k.
\end{equation}
and $D^{k\xi}$ solves
\begin{equation}\label{L2}
(\ddot{D}^{k\xi}-\Delta D^{k\xi}-a(x)\Delta  U^{k\xi} -(g_2(\Delta \dot{u}^{k\xi}_2)-g_2(\Delta \dot{u}^{k}_2)) ,w^j_2)=0,\,\,\,\,\,  \forall w^j_2\in V^k.
\end{equation}
Choosing $w^j_1=-\Delta \dot{U}^{k\xi}$ in (\ref{L1}) and $w^j_2=\Delta \dot{D}^{k\xi}$ in (\ref{L2}), and using the fact that $g_i$ is nondecreasing, we find
\begin{equation*}\label{8} 
\begin{split}
&\frac{d}{dt}\int_\Omega \{|\nabla \dot{U}^{k\xi}(x,t) |^2+|\nabla \dot{D}^{k\xi}(x,t) |^2+|\nabla \Delta U^{k\xi}(x,t)|^2
+|\Delta D^{k\xi}(x,t)|^2\}\,dx\\&+2\frac{d}{dt}\int_\Omega a(x) \Delta D^{k\xi}(x,t)\Delta U^{k\xi}(x,t) \,dx \leq 0 \,\,\,\,\,  \forall   t\geq 0,
\end{split}
\end{equation*}
Integrating in $[0,t]$, to get
\begin{equation*}\label{812} 
\begin{split}
&\int_\Omega |\nabla \dot{U}^{k\xi}(t) |^2+|\nabla \dot{D}^{k\xi}(t) |^2\,dx+(1-c'\|a\|)\int_\Omega |\nabla \Delta U^{k\xi}(t)|^2+|\Delta D^{k\xi}(t)|^2\,dx \\&\leq 
C_2\int_\Omega \{|\nabla \dot{U}^{k\xi}(0) |^2+|\nabla \dot{D}^{k\xi}(0) |^2\int_\Omega |\nabla \Delta U^{k\xi}(0)|^2+|\Delta D^{k\xi}(0)|^2\}\,dx 
\end{split}
\end{equation*}
and $C_2$ is a positive constant depending only on $\|a\|$ and $c'$.\\Dividing by $\xi^2$, and letting $\xi \rightarrow 0$, we find 
\begin{equation*}\label{821} 
\begin{split}
&\int_\Omega \{|\nabla \ddot{u}^{k}_{1}(t) |^2+|\nabla \ddot{u}^{k}_{2}(t) |^2+|\nabla \Delta \dot{u}^{k}_1(t)|^2+|\Delta \dot{u}^{k}_2(t)|^2\}\,dx \\&\leq 
C'_2\int_\Omega \{|\nabla \ddot{u}^{k}_{1}(0) |^2+|\nabla \ddot{u}^{k}_{2}(0) |^2+|\nabla \Delta u^{1k}_1|^2+|\Delta u^{1k}_2|^2\}\,dx
\end{split}
\end{equation*}
We estimate $\|\nabla \ddot{u}^k_{i}(0)\|$. Choosing  $v=-\Delta\ddot{u}^k_{i}$ and $t=0$  in (\ref{PO}), we obtain that
$$
\|\nabla \ddot{u}^k_{1}(0)\|^2 =\int_\Omega  \nabla \ddot{u}^k_{1}( 0)\nabla(-\Delta^2 u^{0k}_1-a(x) u_2^{0k}+ g_1(\Delta u^{1k}_1) )\,dx.
$$
and
$$
\|\nabla \ddot{u}^k_{2}(0)\|^2 =\int_\Omega  \nabla \ddot{u}^k_{2}(0)\nabla(\Delta u^{0k}_2-a(x) u_1^{0k}+ g_2(\Delta u^{1k}_2) )\,dx.
$$
Using Cauchy-Schwarz inequality, we have
$$
\|\nabla \ddot{u}^k_{1}(0)\|\leq \Big(\int_\Omega  |\nabla(-\Delta^2 u^{0k}_1-a(x)u^{0k}_2+g_1(\Delta u^{1k}_1))|^2\,dx\Big)^{\frac{1}{2}}.
$$
and
$$
\|\nabla \ddot{u}^k_{2}(0)\|\leq \Big(\int_\Omega  |\nabla(\Delta u^{0k}_2-a(x)u^{0k}_1+g_2(\Delta u^{1k}_2))|^2\,dx\Big)^{\frac{1}{2}}.
$$
By (\ref{p8T}) and (\ref{p8F}) yields
\begin{equation}\label{O1}
(\ddot{u}_{1}^{k}(0),\ddot{u}_{2}^{k}(0))\,\,\,\,\, \mbox{are  bounded in}\,\,\,\,W\times W
\end{equation}
By (\ref{p8}), (\ref{p82}) and (\ref{O1}), we deduce that
\begin{equation*}\label{8s} 
\begin{split}
\int_\Omega \{|\nabla \ddot{u}^{k}_{1}(t) |^2+|\nabla \ddot{u}^{k}_{2}(t) |^2+|\nabla \Delta \dot{u}^{k}_1(t)|^2+|\Delta \dot{u}^{k}_2(t)|^2\}\,dx \leq C_3\,\,\,\,\,  \forall   t\geq 0,
\end{split}
\end{equation*}
where $C_3$ is a positive constant independent of $k\in \mathbb{N}$. Therefore, we
conclude that
\begin{equation}\label{S5}
 \dot{u}^k_1\,\,\,\,\, \hbox{is bounded in } \,\,\,\,\,\,\, L^\infty(0,T;H_\Delta^3(\Omega))
\end{equation}
\begin{equation}\label{S6}
 \dot{u}^k_2\,\,\,\,\, \hbox{is bounded in } \,\,\,\,\,\,\, L^\infty(0,T;H^2(\Omega))
\end{equation}
\begin{equation}\label{S7}
 \ddot{u}^k_{1}\,\,\,\,\, \hbox{is bounded in } \,\,\,\,\,\,\, L^\infty(0,T;H_0^1(\Omega))
 \end{equation}
 \begin{equation}\label{S8}
 \ddot{u}^k_{2}\,\,\,\,\, \hbox{is bounded in } \,\,\,\,\,\,\, L^\infty(0,T;H_0^1(\Omega))
 \end{equation}
   Applying Dunford-Pettis and Banach-Alaoglu-Bourbaki theorems, we conclude from 
(\ref{S1})-(\ref{S44}) and  (\ref{S5})-(\ref{S8}) that there exists a subsequence $\{u_i^m\}$
of $\{u_i^k\}$ such that
\begin{equation}\label{c1}
(u_1^m,u_2^m)\rightharpoonup (u_1,u_2),\hbox{   weak-star in   } L^\infty(0,T;\widetilde{V}),
\end{equation}
\begin{equation}\label{c2}
(\dot{u}^m_1,\dot{u}_2^m) \rightharpoonup (u'_1,u'_2)\hbox{   weak-star in   } L^\infty(0,T;V),
\end{equation}
\begin{equation}\label{c3}
(\ddot{u}^m_{1},\ddot{u}_2^m) \rightharpoonup (u''_1,u''_2)\hbox{   weak-star in    } L^\infty(0,T;W ),
\end{equation}
\begin{equation}\label{y4}
(\dot{u}^m_1,\dot{u}^m_2)\longrightarrow  (u'_1,u'_2),\hbox{   almost everywhere in    }\Omega\times [0,+\infty)
\end{equation}
\begin{equation}\label{b1}
g_i(\Delta \dot{u}^m_i)\rightharpoonup  \chi_i \hbox{    weak-star in  } L^{2}(\mathcal{A})
\end{equation}
As $(u_1^m,u_2^m)$ is bounded in $L^\infty(0,T;\widetilde{V})$ (by (\ref{c1})) and the injection of $\widetilde{V}$ in $H$ is compact, we have
\begin{equation}\label{y7}
(u_1^m,u_2^m)\longrightarrow  (u_1,u_2),\hbox{   strong  in    }L^2(0,T;H).
\end{equation}
In the other hand, using  (\ref{c1}), (\ref{c3})  and (\ref{y7}), we have
\begin{equation}\label{F13FM}
\begin{split}
\int_{0}^{T}&\int_{\Omega}\Big(
\ddot{u}^m_1(x, t)+\Delta^2 u^m_1(x, t)-a(x) \Delta u_2^k(x,t) \Big)w \ dx\ dt
\longrightarrow\\& \int_{0}^{T}\int_{\Omega}\Big(u_1''(x, t)+\Delta^2 u_1(x, t)-a(x) \Delta u_2(x,t)\Big)w \ dx\ dt,
\end{split}
\end{equation}
and
\begin{equation}\label{F13FM}
\begin{split}
\int_{0}^{T}&\int_{\Omega}\Big(
\ddot{u}^m_2(x, t)-\Delta u^m_2(x, t)-a(x) \Delta u_1^m(x,t) \Big)w \ dx\ dt
\longrightarrow\\& \int_{0}^{T}\int_{\Omega}\Big(u_2''(x, t)-\Delta u_2(x, t)-a(x)\Delta  u_1(x,t)\Big)w \ dx\ dt,
\end{split}
\end{equation}
 for all  $w\in L^{2}(0,T;L^2(\Omega))$.\\
It remains to show the convergence
$$
\int_{0}^{T}\int_{\Omega} g_i(\Delta \dot{u}^m_i) \ w\,dx\ dt\longrightarrow
\int_{0}^{T}\int_{\Omega} g_i(\Delta u'_i) \ w\,dx\ dt,
$$
when $m \rightarrow + \infty.$
 \begin{lemma} 
For each $T>0$, $g_i(\Delta u'_i)\in L^1(\mathcal{A})$,
$ \|g_i(\Delta u'_i)\|_{L^1(\mathcal{A})}\leq K$, where $K$
is a constant independent of $t$ and $g_i(\Delta \dot{u}^k_i)\to g_i(\Delta u'_i)$ in $L^1(\mathcal{A})$.
\end{lemma}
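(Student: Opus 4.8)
The plan is to invoke the classical convergence lemma for monotone nonlinearities (in the spirit of Lions \cite{Lio} and \cite{Feng}), which upgrades almost-everywhere convergence together with a uniform $L^1$-control of $\Delta\dot u_i^m\,g_i(\Delta\dot u_i^m)$ into strong convergence in $L^1(\mathcal A)$. First I would record two elementary consequences of \eqref{g1}: putting $s=0$ in $s^2+g_i^2(s)\le G^{-1}(sg_i(s))$ and using $G(0)=0$ (so $G^{-1}(0)=0$) forces $g_i(0)=0$, and since $g_i$ is nondecreasing this gives the sign condition $sg_i(s)=|s|\,|g_i(s)|\ge 0$ for all $s\in\mathbb{R}$. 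Moreover $g_i'\le\tau_2$ makes $g_i$ globally Lipschitz with $g_i(0)=0$, hence $|g_i(s)|\le\tau_2|s|$; combined with the $L^2$-boundedness of $\Delta\dot u_i^m$ on the finite-measure set $\mathcal A=\Omega\times(0,T)$ (from \eqref{S5}, \eqref{S6}), this immediately yields the uniform estimate $\|g_i(\Delta\dot u_i^m)\|_{L^1(\mathcal A)}\le K$ with $K$ independent of $m$ and $t$.

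Next I would produce the pointwise convergence $\Delta\dot u_i^m\to\Delta u_i'$ a.e.\ on $\mathcal A$. For the Petrovsky component this follows from Aubin--Lions--Simon compactness: by \eqref{S5} and \eqref{S7}, $\Delta\dot u_1^m$ is bounded in $L^\infty(0,T;H_0^1(\Omega))$ while $\partial_t\Delta\dot u_1^m=\Delta\ddot u_1^m$ is bounded in $L^\infty(0,T;H^{-1}(\Omega))$; since $H_0^1\hookrightarrow\hookrightarrow L^2\hookrightarrow H^{-1}$, a subsequence of $\Delta\dot u_1^m$ converges strongly in $L^2(\mathcal A)$ and hence a.e., so by continuity $g_1(\Delta\dot u_1^m)\to g_1(\Delta u_1')$ a.e.

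The main obstacle lies in the wave component $u_2$. Here \eqref{S6} only gives $\dot u_2^m$ bounded in $L^\infty(0,T;H^2(\Omega))$, so $\Delta\dot u_2^m$ is bounded merely in $L^\infty(0,T;L^2(\Omega))$, while \eqref{S8} gives $\Delta\ddot u_2^m$ bounded in $L^\infty(0,T;H^{-1}(\Omega))$. As $L^2$ is not compactly embedded in itself, Aubin--Lions only yields compactness in $C([0,T];H^{-1})$, which is too weak to pass the nonlinearity through. To close this gap I would either extract the extra spatial regularity hidden in the $u_2$-equation (upgrading the compactness of $\Delta\dot u_2^m$), or bypass a.e.\ convergence for this component by a Minty--Browder monotonicity argument: using the weak-star limit \eqref{b1}, the monotonicity of $g_2$, and the $\limsup$ of $\int_{\mathcal A}\Delta\dot u_2^m\,g_2(\Delta\dot u_2^m)\,dx\,dt$ controlled by \eqref{S44}, one identifies the weak limit as $\chi_2=g_2(\Delta u_2')$.

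Finally, granting $g_i(\Delta\dot u_i^m)\to g_i(\Delta u_i')$ a.e.\ together with equi-integrability, I would conclude by Vitali's theorem. The equi-integrability comes from the splitting $\mathcal A=\{|\Delta\dot u_i^m|\le M\}\cup\{|\Delta\dot u_i^m|>M\}$: on the first set $|g_i(\Delta\dot u_i^m)|\le\sup_{|s|\le M}|g_i(s)|$, and on the second, using the sign condition, $|g_i(\Delta\dot u_i^m)|\le M^{-1}\Delta\dot u_i^m\,g_i(\Delta\dot u_i^m)$, whose integral is at most $C/M$ by \eqref{S44}; choosing $M$ large and then the measure of the test set small makes $\int_E|g_i(\Delta\dot u_i^m)|$ uniformly small. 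Vitali then gives $g_i(\Delta\dot u_i^m)\to g_i(\Delta u_i')$ in $L^1(\mathcal A)$, Fatou's lemma gives $g_i(\Delta u_i')\in L^1(\mathcal A)$ with $\|g_i(\Delta u_i')\|_{L^1(\mathcal A)}\le K$, and uniqueness of limits identifies the weak-star limit of \eqref{b1} as $\chi_i=g_i(\Delta u_i')$, which is precisely what is needed to pass to the limit in the nonlinear term.
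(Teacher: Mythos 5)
Your overall skeleton is the same as the paper's: pointwise convergence of $g_i(\Delta\dot u^m_i)$, a uniform integrability estimate exploiting the $L^1(\mathcal A)$-bound \eqref{S44} on $\Delta\dot u^m_i\,g_i(\Delta\dot u^m_i)$, and Vitali's convergence theorem, with Fatou supplying the $L^1$ bound on the limit. Two of your ingredients are in fact cleaner than the paper's: deriving $\|g_i(\Delta\dot u^m_i)\|_{L^1(\mathcal A)}\le K$ from $g_i(0)=0$ and the global Lipschitz bound $|g_i(s)|\le\tau_2|s|$ short-circuits the paper's detour through Cauchy--Schwarz, the splitting $|\Delta u_i'|\gtrless\epsilon$ and the Jensen/$G^{-1}$ estimate; and your equi-integrability argument with a fixed truncation level $M$ is equivalent to, and simpler than, the paper's splitting of $E$ into $E_1,E_2$ at the level $1/\sqrt{|E|}$ via the function $M(r)$.

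The one genuine issue is the pointwise (or strong) convergence of $\Delta\dot u^m_i$, and here you have correctly identified a step that the paper itself glosses over: the paper deduces $g_i(\Delta\dot u^m_i)\to g_i(\Delta u_i')$ a.e.\ from \eqref{y4}, which only asserts a.e.\ convergence of $\dot u^m_i$, not of its Laplacian. Your Aubin--Lions argument settles the $u_1$-component, but for $u_2$ your proposal stops at naming two possible strategies without carrying either out, so as written it is incomplete exactly where the difficulty lies. The monotonicity route can be closed, and in fact gives more than you state: the lower bound $g_2'\ge\tau_1>0$ in \eqref{g1} yields $\tau_1|a-b|^2\le (a-b)\bigl(g_2(a)-g_2(b)\bigr)$, so once the energy identity and weak lower semicontinuity give $\limsup_m\int_{\mathcal A}(\Delta\dot u_2^m-\Delta u_2')\bigl(g_2(\Delta\dot u_2^m)-g_2(\Delta u_2')\bigr)\,dx\,dt\le 0$, you obtain strong $L^2(\mathcal A)$ convergence of $\Delta\dot u_2^m$, hence a.e.\ convergence along a subsequence, and your Vitali argument then applies verbatim; establishing that $\limsup$ requires controlling the coupling term $\int_\Omega a(x)\Delta u_1^m\Delta u_2^m\,dx$, for which the strong convergence already obtained for the $u_1$-component suffices. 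Without this (or an equivalent) step the lemma's claim of strong $L^1$ convergence for $i=2$ is not proved: mere identification of the weak-star limit $\chi_2$ in \eqref{b1} would not by itself give convergence in $L^1(\mathcal A)$.
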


\begin{proof}
We claim that
$$g(\Delta u') \in L^1(\mathcal{A}).$$ 
Indeed, since $g_i$ is continuous, we deduce from (\ref{y4})
\begin{equation}
\label{gk}
g_i(\Delta \dot{u}^k_i)  \longrightarrow g_i(\Delta u'_i)\,\,\,\, \hbox{ almost everywhere in } \,\,\, \mathcal{A}.
\end{equation}
$$
\Delta \dot{u}^k_i g_i(\Delta \dot{u}^k_i)  \longrightarrow \Delta u'_ig_i(\Delta u'_i)\,\,\,\, \hbox{ almost everywhere in } \,\,\, \mathcal{A}.
$$
Hence, by \eqref{S44} and Fatou's Lemma, we have
\begin{equation}\label{A1}
\int_0^T\int_{\Omega}\Delta u'_i(x,t)g_i(\Delta u'_i(x,t))\,dx\,dt\leq K_1, \,\quad 
\text{for } T>0
\end{equation}
Now, we can estimate
$\int_0^T\int_{\Omega}|g_i(\Delta u'_i(x,t))|\,dx\,dt$.
By Cauchy-Schwarz inequality, we have
\begin{align*}
\int_0^T\int_{\Omega}|g_i(\Delta u'_i(x,t))|\,dx\,dt
&\leq c|\mathcal{A}|^{1/2}
\Big(\int_0^T\int_{\Omega}| g_i(\Delta u'_i(x,t))|^2\,dx\,dt\Big)^{1/2}.
\end{align*}
Using (\ref{g1}) and (\ref{A1}), we obtain
\begin{equation*}
\begin{split}
\int_0^T\int_{\Omega}| g_i(\Delta u'_i(x,t))|^2\,dx\,dt &\leq \int_0^T\int_{|\Delta u'_i|> \varepsilon} \Delta u'_ig_i(\Delta u'_i)\,dx\,dt+
\int_0^T\int_{|\Delta u'_i|\leq \varepsilon} G^{-1}(\Delta u'_ig_i(\Delta u'_i))\,dx\,dt\\&
\leq c\int_{0}^T\int_\Omega \Delta u'_ig_i(\Delta u'_i)\,dx\,dt+cG^{-1}\Big(\int_{ \mathcal{A}} \Delta u'_ig_i(\Delta u'_i)\,dx\,dt\Big)
\\&\leq c\int_0^T\int_{\Omega} \Delta u'_ig_i(\Delta u'_i)\,dx\,dt+c'G^{*}(1)+c''\int_{\Omega} \Delta u'_ig(\Delta u'_i)\,dx\,dt \\&\leq cK_1+c'G^{*}(1),\,\,\,\,\,\,\,\, \mbox{for} \,\,\, T>0.
\end{split}
\end{equation*}
Then
$$
\int_0^T\int_{\mathcal{A}}|g_i(\Delta u'_i(x,t))|\,dx\,d \leq K, \,\,\,\,\,\,\,\, \mbox{for}\,\,\, T>0.
$$
Let $ E \subset \Omega \times [0,T]$ and set
$$
E_1=\Big\{ (x,t) \in E: |g_i(\Delta \dot{u}^m_i(x,t))| \leq \frac{1}{\sqrt{|E|}}\Big\} ,\quad
 E_2 =E\backslash E_1,
$$
where $|E|$ is the measure of $E$. If $M(r)=\inf \{ |s|: s\in \mathbb{R} \text{ and }
 |g_i(s)| \geq r\}$
$$
\int_E |g_i(\Delta \dot{u}^m_i)|\,dx\,dt
\leq c \sqrt{|E|}+ \Big(M\Big(\frac{1}{\sqrt{|E|}}\Big)\Big)^{-1}
\int_{E_2} |\Delta \dot{u}^m_ig_i(\Delta \dot{u}^m_i)|\,dx\,dt.
$$
By applying \eqref{S44} we deduce that
 $$
\sup_m \int_E g_i(\Delta \dot{u}^m_i)\ dx \ dt  \longrightarrow 0,\hbox{ when }
|E| \longrightarrow 0.
$$ 
From Vitali's convergence
theorem we deduce that
\begin{equation*}
g_i(\Delta \dot{u}^m_i) \to g_i(\Delta u'_i) \quad \text{in } L^1(\mathcal{A}).
\end{equation*}
This completes the proof.
 \end{proof}
Then (\ref{b1}) implies that 
\begin{equation*}
g_i(\Delta \dot{u}^m_i)\rightharpoonup g_i(\Delta u'_i) ,\hbox{    weak-star in  } L^{2}([0,T]\times\Omega).
\end{equation*}
We deduce, for all $v\in L^{2}([0,T]\times L^2(\Omega)$, that
$$
\int_{0}^{T}\int_{\Omega} g_i(\Delta \dot{u}^m_i)w\,dx\,dt\longrightarrow 
\int_{0}^{T}\int_{\Omega} g_i(\Delta u'_i)w\,dx\,dt.
$$
Finally we have shown that, for all $w\in L^{2}([0,T]\times L^2(\Omega))$:
$$
\int_{0}^{T}\int_{\Omega}\Big(u''_1(x, t)+\Delta^2 u_1(x, t)-a(x)\Delta u_2(x,t)-g_1(\Delta u'_1(x,t))\Big)w\,dx\,dt=0.
$$
and
$$
\int_{0}^{T}\int_{\Omega}\Big(u''_2(x, t)-\Delta u_2(x, t)-a(x)\Delta u_1(x,t)-g_2(\Delta u'_2(x,t))\Big)w\,dx\,dt=0.
$$
Therefore, $(u_1,u_2)$ are  a solutions for the problem (\ref{P}).
\section{Proof of  Theorem \ref{G} }
 From now on, we denote by c various positive constants which may be different on
different occurrences. Multiplying the first equation of  (\ref{P}) by $-\frac{\varphi{(E)}}{E}\Delta u_1$, we obtain
\begin{equation*}\label{FA7}
\begin{split}
0&=\int_S^T -\frac{\varphi{(E)}}{E} \int_\Omega \Delta  u_1( u''_1 +\Delta^2 u_1 -a(x)\Delta u_2+ g_1(\Delta u'_1))\,dx\,dt\\&=
-\Big[\frac{\varphi{(E)}}{E}\int_\Omega  u'_1\Delta u_1\,dx\Big]_S^T+\int_S^T \Big(\frac{\varphi{(E)}}{E}\Big)'\int_\Omega \Delta u_1 u'_1\,dx\,dt\\&
-2\int_S^T \frac{\varphi{(E)}}{E}\int_\Omega |\nabla u'_1|^2\,dx\,dt+ \int_S^T \frac{\varphi{(E)}}{E}\int_\Omega (|\nabla u'_1|^2+|\nabla \Delta u_1|^2)\,dx\,dt\\&
+\int_S^T \frac{\varphi{(E)}}{E}\int_\Omega a(x)\Delta u_1\Delta  u_2\,dx\,dt +\int_S^T \frac{\varphi{(E)}}{E}\int_\Omega \Delta u_1 . g_1(\Delta  u'_1)\,dx\,dt 
\end{split}
\end{equation*}
Similarly, we have
\begin{equation*}\label{FA7}
\begin{split}
0&=\int_S^T -\frac{\varphi{(E)}}{E} \int_\Omega \Delta  u_2( u''_2 +\Delta u_2 -a(x)\Delta u_1+ g_2(\Delta u'_2))\,dx\,dt\\&=
-\Big[\frac{\varphi{(E)}}{E}\int_\Omega  u'_2\Delta u_2\,dx\Big]_S^T+\int_S^T \Big(\frac{\varphi{(E)}}{E}\Big)'\int_\Omega \Delta u_2 u'_2\,dx\,dt\\&
-2\int_S^T \frac{\varphi{(E)}}{E}\int_\Omega |\nabla u'_2|^2\,dx\,dt+ \int_S^T \frac{\varphi{(E)}}{E}\int_\Omega (|\nabla u'_2|^2+| \Delta u_2|^2)\,dx\,dt\\&
+\int_S^T \frac{\varphi{(E)}}{E}\int_\Omega a(x)\Delta u_2\Delta  u_1\,dx\,dt +\int_S^T \frac{\varphi{(E)}}{E}\int_\Omega \Delta u_2 . g_2(\Delta  u'_2)\,dx\,dt 
\end{split}
\end{equation*}
Taking their sum, we obtain
\begin{equation}\label{FA7S}
\begin{split}
 \int_S^T \varphi{(E)}\,dt &\leq  \Big[\frac{\varphi{(E)}}{E}\int_\Omega  u'_1\Delta u_1+u'_2\Delta u_2\,dx\Big]_S^T
 \\&-\int_S^T \Big(\frac{\varphi{(E)}}{E}\Big)'\int_\Omega \Delta u_1 u'_1+\Delta u_2 u'_2\,dx\,dt\\&
+2\int_S^T \frac{\varphi{(E)}}{E}\int_\Omega |\nabla u'_1|^2+|\nabla u'_2|^2\,dx\,dt
\\&-\int_S^T \frac{\varphi{(E)}}{E}\int_\Omega \Delta u_1 . g_1(\Delta u'_1)+\Delta u_2 . g_2(\Delta u'_2)\,dx\,dt
\end{split}
\end{equation}
Since $E$ is non-increasing, we find that
$$
\Big[\frac{\varphi{(E)}}{E}\int_\Omega  u'_1\Delta u_1+u'_2\Delta u_2\,dx\Big]_S^T\leq c \varphi{(E(S))}
$$
$$
\Big|\int_S^T \Big(\frac{\varphi{(E)}}{E}\Big)'\int_\Omega \Delta u_1 u'_1+\Delta u_2 u'_2\,dx\,dt\Big| \leq c \varphi{(E(S))}
$$

Using these estimates, we conclude from (\ref{FA7S}) that
\begin{equation}\label{I1}
\begin{split}
\int_S^T \varphi{(E)}\,dt &\leq  C \varphi{(E(S))}+2\int_S^T \frac{\varphi{(E)}}{E}\int_\Omega |\nabla u'_1|^2+|\nabla u'_2|^2\,dx\,dt\\&
+\int_S^T \frac{\varphi{(E)}}{E}\int_\Omega |\Delta u_1| . |g_1(\Delta u'_1)|+ |\Delta u_2| . |g_2(\Delta u'_2)|\,dx\,dt \\&
 \end{split}
\end{equation}
Now, we estimate the terms of the right-hand side of (\ref{I1}) in order to apply the
results of Lemma \ref{0}.\\
As in Komornik \cite{KOM}, we consider the following partition of $\Omega$, 
$$
\Omega^+=\{x\in \Omega:|\Delta u'_i|> \epsilon\},\quad
\Omega^-=\{x\in \Omega:|\Delta u'_i|\leq \epsilon \}.
$$
We distinguish two cases:\\
{$\blacktriangleright$\bf{Case 1. $G$ is linear on  $[0,\epsilon]$}}.
By using Sobolev embedding and  Young's  inequality, we obtain
\begin{equation}\label{F1}
\begin{split}
&\int_S^T \frac{\varphi{(E)}}{E}\int_{\Omega^+} |\Delta u_1| . |g_1(\Delta  u'_1)|\,dx\,dt+\int_S^T \frac{\varphi{(E)}}{E}\int_{\Omega^+} |\nabla u'_1|^2\,dx\,dt \\&\leq \varepsilon \int_S^T \frac{\varphi{(E)}}{E}\int_{\Omega^+} |\Delta u_1|^2\,dx\,dt +C(\varepsilon )
\int_S^T \frac{\varphi{(E)}}{E}\int_{\Omega^+}  |g_1(\Delta  u'_1)|^2\,dx\,dt+c\int_S^T \frac{\varphi{(E)}}{E}\int_{\Omega^+} |\Delta u'_1|^2
\\&\leq \varepsilon c' \int_S^T \frac{\varphi{(E)}}{E}\int_\Omega |\nabla \Delta u_1|^2\,dx\,dt +(C(\varepsilon )c_2+\frac{c}{c_1})
\int_S^T \frac{\varphi{(E)}}{E}\int_\Omega  \Delta u'_1g_1(\Delta  u'_1)\,dx\,dt
\\&\leq \varepsilon C \int_S^T \varphi{(E)}\,dt 
+C_1(\varepsilon )\int_S^T \frac{\varphi{(E)}}{E}\int_\Omega  \Delta u'_1g_1(\Delta  u'_1)\,dx\,dt,
\end{split}
\end{equation}
Similarly, we have
\begin{equation}\label{F2}
\begin{split}
&\int_S^T \frac{\varphi{(E)}}{E}\int_{\Omega^+} |\Delta u_2| . |g_2(\Delta  u'_2)|\,dx\,dt+\int_S^T \frac{\varphi{(E)}}{E}\int_{\Omega^+} |\nabla u'_2|^2\,dx\,dt \\&\leq  \varepsilon C\int_S^T \varphi{(E)}\,dt 
+C_2(\varepsilon) \int_S^T \frac{\varphi{(E)}}{E}\int_\Omega  \Delta u'_2g_2(\Delta  u'_2)\,dx\,dt.
\end{split}
\end{equation}
Summing (\ref{F1}) and (\ref{F2}), and noting that $s \mapsto \frac{\varphi(s)}{s}$ is non-decreasing, we obtain
\begin{equation}\label{F11}
\begin{split}
\int_S^T \frac{\varphi{(E)}}{E}\int_{\Omega^+} &|\Delta u_1| . |g_1(\Delta  u'_1)|+|\Delta u_2| . |g_2(\Delta  u'_2)|\,dx\,dt\\&+\int_S^T \frac{\varphi{(E)}}{E}\int_{\Omega^+} |\nabla u'_1|^2+|\nabla u'_2|^2\,dx\,dt   \\&\leq
\varepsilon C \int_S^T \varphi{(E)}\,dt +C'(\varepsilon)\int_S^T \frac{\varphi{(E)}}{E} (-E'(t))\,dt
\\&\leq \varepsilon C \int_S^T \varphi{(E)}\,dt +C'(\varepsilon) \varphi{(E(S))}
\end{split}
\end{equation}
and
\begin{equation*}
\begin{split}
\int_S^T \frac{\varphi{(E)}}{E}\int_{\Omega^-} &|\Delta u_1| . |g(\Delta  u'_1)|+|\Delta u_2| . |g(\Delta  u'_2)|\,dx\,dt\\&+\int_S^T \frac{\varphi{(E)}}{E}\int_{\Omega^-} |\nabla u'_1|^2+|\nabla u'_2|^2\,dx\,dt   \\&\leq
\varepsilon C \int_S^T \varphi{(E)}\,dt +C'(\varepsilon)\int_S^T \frac{\varphi{(E)}}{E} (-E'(t))\,dt
\\&\leq \varepsilon C \int_S^T \varphi{(E)}\,dt +C'(\varepsilon) \varphi{(E(S))}.
\end{split}
\end{equation*}
Inserting these two inequalities into (\ref{I1}) and choosing $\varepsilon> 0$ small enough, we
deduce that
$$
\int_S^T \varphi{(E(t))}\,dt \leq c \varphi{(E(S))}
$$
Since, choosing $ \varphi{(s)}=s$, we deduce from (\ref{70K}) that
$$
E(t) \leq ce^{-\omega t}.
$$
{$\blacktriangleright$\bf{ Case 2. $G'(0)=0,\,\,\, G''>0$ on $]0,\epsilon]$}}\\
Using (\ref{g1}) and the fact that $s \mapsto \frac{\varphi(s)}{s}$ is non-decreasing, we obtain
\begin{equation*}\label{FA7}
\begin{split}
&\int_S^T \frac{\varphi{(E)}}{E}\int_{\Omega^+} |\nabla u'_1|^2+|\Delta u_1| . |g(\Delta u'_1)|\,dx\,dt \\&\leq \varepsilon C \int_S^T \varphi{(E)}\,dt+
\int_S^T \frac{\varphi{(E)}}{E}\int_{\Omega^+} \Delta u'_1g_1(\Delta u'_1)\,dx\,dt
 \end{split}
\end{equation*}
and
\begin{equation*}\label{FA7}
\begin{split}
&\int_S^T \frac{\varphi{(E)}}{E}\int_{\Omega^+} |\nabla u'_2|^2+|\Delta u_2| . |g(\Delta u'_2)|\,dx\,dt
\\&\leq \varepsilon C \int_S^T \varphi{(E)}\,dt+
\int_S^T \frac{\varphi{(E)}}{E}\int_{\Omega^+} \Delta u'_2g_2(\Delta u'_2)\,dx\,dt. \\&
\end{split}
\end{equation*}
Summing these two inequalities, we have
\begin{equation}\label{I5}
\begin{split}
&\int_S^T \frac{\varphi{(E)}}{E}\int_{\Omega^+} |\nabla u'_1|^2+|\Delta u_1| . |g_1(\Delta u'_1)|\,dx\,dt \\&
+\int_S^T \frac{\varphi{(E)}}{E}\int_{\Omega^+} |\nabla u'_2|^2+|\Delta u_2| . |g_2(\Delta u'_2)|\,dx\,dt
\\&\leq \varepsilon C \int_S^T \varphi{(E)}\,dt+C \varphi{(E(S))},
\end{split}
\end{equation}
and exploit Jensen's inequality and the concavity of $G^{-1}$ to obtain
\begin{equation}\label{I2}
\begin{split}
&\int_S^T \frac{\varphi{(E)}}{E}\int_{\Omega^-} |\Delta u_1| . |g_1(\Delta u'_1)|+|\nabla u'_1|^2\,dx\,dt 
\\&\leq \varepsilon c' \int_S^T \frac{\varphi{(E)}}{E}\int_\Omega |\nabla \Delta u_1|^2\,dx\,dt +C(\varepsilon )
\int_S^T \frac{\varphi{(E)}}{E}\int_\Omega  (|\Delta u'_1|^2+ |g_1(\Delta u'_1)|^2)\,dx\,dt
\\&\leq \varepsilon c' \int_S^T \frac{\varphi{(E)}}{E}\int_\Omega |\nabla \Delta u_1|^2\,dx\,dt +C(\varepsilon )
\int_S^T \frac{\varphi{(E)}}{E}|\Omega | G^{-1}\Big( \frac{1}{|\Omega|}\int_\Omega \Delta u'_1g_1(\Delta u'_1)\,dx\Big)\,dt
\\&\leq \varepsilon C \int_S^T \varphi{(E)}\,dt +C(\varepsilon )
\int_S^T \frac{\varphi{(E)}}{E}|\Omega | G^{-1}\Big( \frac{1}{|\Omega|}\int_\Omega \Delta u'_1g_1(\Delta u'_1)\,dx\Big)\,dt
\end{split}
\end{equation}
Similarly, we have
\begin{equation}\label{I3}
\begin{split}
&\int_S^T \frac{\varphi{(E)}}{E}\int_{\Omega^-} |\Delta u_2| . |g_2(\Delta u'_2)|+|\nabla u'_2|^2\,dx\,dt 
\\&\leq \varepsilon  \int_S^T \frac{\varphi{(E)}}{E}\int_\Omega | \Delta u_2|^2\,dx\,dt +C(\varepsilon )
\int_S^T \frac{\varphi{(E)}}{E}|\Omega | G^{-1}\Big( \frac{1}{|\Omega|}\int_\Omega \Delta u'_2g_2(\Delta u'_2)\,dx\Big)\,dt
\\&\leq \varepsilon C \int_S^T \varphi{(E)}\,dt  +C(\varepsilon )
\int_S^T \frac{\varphi{(E)}}{E}|\Omega | G^{-1}\Big( \frac{1}{|\Omega|}\int_\Omega \Delta u'_2g_2(\Delta u'_2)\,dx\Big)\,dt
\end{split}
\end{equation}
Let $G^*$ denote the dual function of the convex function $G$ in the sense of
Young (see Arnold \cite[p. 64]{Ar}). Then $G^*$ is the Legendre transform of $G$,
which is given by (see Arnold \cite[p. 61-62]{Ar}) i.e.,
$$
G^*(s)= \sup_{t\in \mathbb{R}^+}(st-G(t)).
$$
Then $G^*$ is given by
$$
G^*(s)=s(G')^{-1}(s)-G[(G')^{-1}(s)],\,\,\,\,  \forall s\geq 0
$$
and satisfies the following inequality
\begin{equation}\label{A3}
st\leq G^*(s)+G(t)\,\,\,\,  \forall s,\, t\geq 0
\end{equation}
Choosing  $\varphi(s)=sG'( \epsilon s)$, we obtain 
\begin{equation}\label{A4}
G^{*} \Big(\frac{\varphi(s)}{s}\Big) =s \epsilon G'( \epsilon s) =  \epsilon s G'(\epsilon s)- G(\epsilon s) \leq \epsilon  \varphi(s) 
\end{equation}

Making use of (\ref{A3}) and (\ref{A4}), we have
\begin{equation}\label{I4}
\begin{split}
& \int_S^T \frac{\varphi{(E)}}{E}|\Omega | G^{-1}\Big( \frac{1}{|\Omega|}\int_\Omega \Delta  u'_ig_i(\Delta u'_i)\Big)\,dx\,dt
  \\&\leq c \int_S^T G^{*}(\frac{\varphi(E)}{E}) \,dt +
c \int_S^T \int_\Omega\Delta u'_ig_i(\Delta u'_i)\Big)\,dx\,dt
\\&\leq c \int_S^T \varphi(E) \,dt  +c \int_S^T \int_\Omega\Delta u'_ig_i(\Delta u'_i)\Big)\,dx\,dt
\end{split}
\end{equation}
Summing (\ref{I2}) and (\ref{I3}) and using (\ref{I4}), we obtain
\begin{equation}\label{I6}
\begin{split}
&\int_S^T \frac{\varphi{(E)}}{E}\int_{\Omega^-} |\Delta u_1| . |g_1(\Delta u'_1)|+|\nabla u'_1|^2\,dx\,dt +\int_S^T \frac{\varphi{(E)}}{E}\int_{\Omega^-}|\Delta u_2| . |g_2(\Delta u'_2)| +|\nabla u'_2|^2 \,dx\,dt
\\&\leq \varepsilon C \int_S^T \varphi{(E)}\,dt  +C(\varepsilon ) E(S)
\end{split}
\end{equation}
Then, choosing $\varepsilon>0$ small enough  and substitution of (\ref{I5}) and (\ref{I6}) into (\ref{I1}) gives
\begin{equation*}\label{FA7}
\begin{split}
\int_S^T \varphi(E(t))\,dt &\leq  c(E(S)+ \varphi(E(S)) )\\& \leq 
c\Big(1+ \frac{\varphi(E(S))}{E(S)}\Big) E(S)   \leq c E(S),\,\,\,  \forall S\geq 0
\end{split}
\end{equation*}
Using Lemma  \ref{0} in the particumar case where  $\Psi(s)=\omega \varphi(s)$ we deduce from
(\ref{77}) our estimate (\ref{I1}). The proof of Theorem \ref{G}  is now complete.

\begin{example}
Let $g_i$ be given by $ g_i(s)=s^p(-\ln s)^q$ where $p\geq 1$  and $q\in \mathbb{R}$ on $]0,\epsilon]$.
 Then $ g'_i(s)= s^{p-1}(-\ln s)^{q-1}(p(-\ln s)-q)$ which is an increasing
function in the right neighborhood of $0$  (if $q=0$ we can take $\epsilon =1$). The
function $G$ is defined in the neighborhood of $0$ by
$$
G(s)=cs^{\frac{p+1}{2}}(-\ln \sqrt{s})^q
$$
and we have
$$
G'(s)=cs^{\frac{p-1}{2}}(-\ln \sqrt{s})^{q-1}\Big(\frac{p+1}{2}(-\ln \sqrt{s})-\frac{q}{2}\Big),\,\,\,\,\, \mbox{when s is near } 0
$$
Thus
$$
\varphi(s)=cs^{\frac{p+1}{2}}(-\ln \sqrt{s})^{q-1}\Big(\frac{p+1}{2}(-\ln \sqrt{s})-\frac{q}{2}\Big),\,\,\,\,\, \mbox{when s is near } 0
$$
and 
\begin{equation*}\label{FA7}
\begin{split}
\psi(t)&= c\int_t^1 \frac{1}{s^{\frac{p+1}{2}}(-\ln \sqrt{s})^{q-1}\Big(\frac{p+1}{2}(-\ln \sqrt{s})-\frac{q}{2}\Big)}\,ds\\&
=c\int_1^{\frac{1}{\sqrt{t}}} (\ln z)^{q-1}\Big(\frac{p+1}{2}\ln z-\frac{q}{2}\Big)\,dz,\,\,\,\,\, \mbox{when t is near } 0
\end{split}
\end{equation*}
We obtain in the neighborhood of $0$
\begin{equation*}
\psi(t)=\left\{
\begin{aligned}
& c\frac{1}{t^{\frac{p-1}{2}}(-\ln t)^q}
 &\mbox{if}\,\,\, q>1& \\
 &c(-\ln t)^{q-1} &\mbox{if}\,\, p=1,\, q<1&
 \\
 &c(\ln (-\ln t)) &\mbox{if}\,\, p=1,\, q=1&\\
\end{aligned}
\right.
\end{equation*}
and then in the neighborhood of $+\infty$
\begin{equation*}
\psi^{-1}(t)=\left\{
\begin{aligned}
& ct^{-\frac{2}{p-1}}(\ln t)^{-\frac{2q}{p-1}}
 &\mbox{if}\,\,\, q>1& \\
 &ce^{-t^{\frac{1}{1-q}}} &\mbox{if}\,\, p=1,\, q<1&
 \\
 &ce^{-e^t}&\mbox{if}\,\, p=1,\, q=1&\\
\end{aligned}
\right.
\end{equation*}
Since $h(t)=t$ as $t$ tends to infinity, we obtain
\begin{equation*}
E(t)\leq \left\{
\begin{aligned}
& ct^{-\frac{2}{p-1}}(\ln t)^{-\frac{2q}{p-1}}
 &\mbox{if}\,\,\, q>1& \\
 &ce^{-t^{\frac{1}{1-q}}} &\mbox{if}\,\, p=1,\, q<1&
 \\
 &ce^{-e^t}&\mbox{if}\,\, p=1,\, q=1&\\
\end{aligned}
\right.
\end{equation*}
\end{example}

\bigskip

\end{document}